\def\pK{{\partial K}}
\def\bn{{\bf n}}
\def\vn{{\bf n}}
\def\3bar{{|\hspace{-.02in}|\hspace{-.02in}|}}
\title{Interior penalty discontinuous Galerkin method on very general polygonal and polyhedral meshes}
\author{Mu Lin\thanks{Department of
Mathematics, University of Arkansas at Little Rock, Little Rock, AR
72204 (lxmu@ualr.edu).}\and Junping Wang\thanks{Division of Mathematical Sciences, National
Science Foundation, Arlington, VA 22230 (jwang@\break nsf.gov). The
research of Wang was supported by the NSF IR/D program, while
working at the Foundation. However, any opinion, finding, and
conclusions or recommendations expressed in this material are those
of the author and do not necessarily reflect the views of the
National Science Foundation.}
\and Yanqiu Wang\thanks{Department of Mathematics,
Oklahoma State University, Stillwater, OK 74075 (yqwang@math.okstate.edu).}
\and Xiu Ye\thanks{Department of
Mathematics, University of Arkansas at Little Rock, Little Rock, AR
72204 (xxye@ualr.edu). This research was supported in part by
National Science Foundation Grant DMS-1115097.}}
\begin{document}
\maketitle

\begin{abstract}
This paper focuses on interior penalty discontinuous Galerkin
methods for second order elliptic equations on very general
polygonal or polyhedral meshes. The mesh can be composed of any
polygons or polyhedra which satisfies certain shape regularity
conditions characterized in a recent paper by two of the authors in
\cite{WangYe2012}. Such general meshes have important application in
computational sciences. The usual $H^1$ conforming finite element
methods on such meshes are either very complicated or impossible to
implement in practical computation. However, the interior penalty
discontinuous Galerkin method provides a simple and effective
alternative approach which is efficient and robust. This article
provides a mathematical foundation for the use of interior penalty
discontinuous Galerkin methods in general meshes.
\end{abstract}

\begin{keywords}
discontinuous Galerkin, finite element, interior penalty,
second-order elliptic equations, hybrid mesh.
\end{keywords}

\begin{AMS}
65N15, 65N30.
\end{AMS}

\section{Introduction}
Most finite element methods are constructed on triangular and quadrilateral meshes,
or on tetrahedral, hexahedral, prismatic, and pyramidal meshes.
To extend the idea of the finite element method into meshes employing general polygonal and polyhedral elements,
one immediately faces the problem of choosing suitable discrete spaces on general polygons and polyhedrons.
This issue has rarely been addressed in the past, partly because it can usually be circumvented by dividing
the polygon or polyhedron into sub-elements using only one or two basic shapes.
However, allowing the use of general polygonal and polyhedral elements does provide
more flexibility, especially for complex geometries or problems with certain physical constraints.
One of such example is the modeling of composite microstructures in material sciences.
A well-known solution to this problem is the Voronoi cell finite element method
\cite{Ghosh94, Ghosh95, Ghosh04, Moorthy98},
in which the mesh is composed of polygons or polyhedrons representing
the grained microstructure of the given material.
The main difficulty of constructing conforming finite element methods on Voronoi meshes is that,
the finite element space has to be carefully chosen so that it is continuous along interfaces.
Although the constructions on triangles, quadrilaterals, or three-dimensional simplexes are straight forward,
it is not easy for general polygons and polyhedrons.
Probably the only practically used solution is the rational polynomial interpolants proposed by
Wachspress \cite{Wachspress75}, in which
rational basis functions are defined using distances from several ``nodes''.
An important constraint in the construction of the Wachspress basis is that, the rational basis functions need to be
piecewise linear along the boundary of every element, in order to ensure $H^1$ conformity of the finite element space.
This not only limits the approximation order of the entire Wachspress finite element space, but also
complicates the construction.
The Wachspress element has gained a renewed interest recently \cite{Dasgupta03, Dasgupta03b, Sukumar06}.
However, as we have pointed out above, its construction is complicated and usually requires the aid of computational
algebraic systems such as Maple.

Another practically important issue is to define finite element methods on hybrid meshes.
Hybrid meshes are frequently used nowadays. It can handle complicated geometries,
and can sometimes reduce the total number of unknowns.
Another possible reason for using the hybrid mesh is that, some engineers argue that in three-dimensions,
a hexahedral mesh yields more accurate solution than a tetrahedral mesh for the same geometry \cite{Yamakawa03, Yamakawa09},
as partly verified by numerical experiments.
However, pure hexahedral meshes lack the ability of handling complicated geometries.
Hence a hybrid mesh becomes a welcomed compromise between accuracy and flexibility.
For conforming finite element methods based on hybrid meshes, continuity requirements on
interfaces must be satisfied. Such a coupling is straight-forward for the $H^1$-conforming finite elements
on a triangular-quadrilateral hybrid mesh.
However, for three-dimensional meshes, high order finite elements, or other complicated finite element spaces,
it usually requires special treatments.

An alternative solution, that can address both issues mentioned above,
is to use the weak Galerkin method proposed in \cite{WangYe2012}.
The weak Galerkin method uses discontinuous piecewise polynomials inside each element and on the interfaces
to approximate the variational solution. In \cite{WangYe2012}, the authors have proved optimal convergence
of the weak Galerkin method for the mixed formulation of second order elliptic equations on very general
polygonal and polyhedral meshes. Most of the existing error analysis of finite element methods assume triangular, quadrilateral, or some commonly-seen three-dimensional meshes. To our knowledge, it is the first time that optimal convergence for the finite element solutions has been rigorously proved in \cite{WangYe2012} for general meshes of arbitrary polygons and polyhedrons.

The discontinuous Galerkin method imposes the interface continuity weakly,
and is known to be able to handle non-conformal, hybrid meshes as well as a variety of basis functions.
There have been many research works in this direction, for example, nodal discontinuous Galerkin methods
\cite{Bergot10, Cohen00, Hesthaven00} for hyperbolic conservation laws.  However, we would like to point out that so far there has been no theoretical analysis on the convergence rate of discontinuous Galerkin method, on very general polygonal or polyhedral meshes yet.
Motivated by the work in \cite{WangYe2012}, here we would like to fill the gap. The objective of this paper is to establish
the theoretical analysis of the interior penalty discontinuous Galerkin method \cite{Arnold02}
for elliptic equations on very general meshes and discrete spaces.

The paper is organized as follows. In Section 2, we briefly describe the interior penalty discontinuous Galerkin method
in an abstract setting. In Section 3, several assumptions on the discrete spaces are listed, which form
a minimum requirement for the well-posedness and the approximation property of the discrete formulation.
Abstract error estimations are given. In Section 4, we discuss choices of meshes and discrete spaces that satisfy
the assumptions given in Section 3. Finally, numerical results are presented in Section 5.

\section{The model problem and the interior penalty method}
Consider the model problem
\begin{equation} \label{eq:ellipticeq}
\begin{cases}
-\Delta u=f\qquad &\mbox{in }\Omega,\\
u=0 &\mbox{on }\partial\Omega,
\end{cases}
\end{equation}
where $\Omega\in\mathbb{R}^d(d=2,3)$ is a closed domain with Lipschitz continuous boundary,
and $f\in L^2(\Omega)$.

For any subdomain $K\subset \Omega$ with Lipschitz continuous boundary, we use the standard
definition of Sobolev spaces $H^s(K)$ with $s\ge 0$ (e.g., see
\cite{adams, ciarlet} for details). The associated inner product,
norm, and seminorms in $H^s(K)$ are denoted by
$(\cdot,\cdot)_{s,K}$, $\|\cdot\|_{s,K}$, and $|\cdot|_{s,K}$, respectively.
When $s=0$, $H^0(K)$ coincides with the space
of square integrable functions $L^2(K)$. In this case, the subscript
$s$ is suppressed from the notation of norm, semi-norm, and inner
products. Furthermore, the subscript $K$ is also suppressed when
$K=\Omega$. Finally, all above notations can easily be extended
to any $e\subset \partial K$.
For the $L^2$ inner product on $e$, we usually denote it as
$\langle\cdot,\cdot\rangle_{e}$ in stead of $(\cdot,\cdot)_{e}$, as it can be
replaced by the duality pair when needed.

For simplicity, we assume that $\Omega$ satisfy certain conditions such that
Equation (\ref{eq:ellipticeq}) has at least $H^{r}$ regularity with $r>3/2$,
that is, the solution to Equation (\ref{eq:ellipticeq}) satisfies $u\in H^{r}(\Omega)$ and
\begin{equation} \label{eq:regularity}
  \|u\|_r \le C_R \|f\|.
\end{equation}
This assumption is standard in the practice of interior penalty discontinuous Galerkin methods,
as it ensures that the exact solution $u$ also satisfies the discontinuous Galerkin formulation,
and thus the a priori error estimation can be easily derived in a Lax-Milgram framework.
However, such a regularity assumption is not necessary in the practice of interior penalty methods.
A well-known technique, which was first proposed by
Gudi \cite{Gudi10}, is to use a posteriori error estimation to derive an a priori error estimation
for the interior penalty method, with only minimum regularity requirement $u\in H^1(\Omega)$.
We believe that the same technique applies for the general polygonal and polyhedral meshes,
as long as a working a posteriori error estimation is available.
However, here we choose to completely skip this issue, as it is not the main purpose of this paper.

Assume that for all set $K$ discussed in this paper, including $\Omega$ itself,
the unit outward normal vector $\bn$ is defined almost everywhere on $\partial K$.
Note this is true for all polygonal and polyhedral elements with Lipschitz continuous boundaries.
Since the exact solution $u\in H^{r}(\Omega)$ with $r>3/2$, it is clear that for any smooth function $v$ defined on $K$,
$$
(\nabla u, \, \nabla v)_K - \langle\nabla u\cdot\vn,\, v\rangle_{\partial K} = (f,\, v)_K,
$$
where $(\cdot,\cdot)_K$ is the $L^2$-inner product in
$L^2(K)$ and  $\langle\cdot,\cdot\rangle_\pK$ is the $L^2$-inner product in
$L^2(\pK)$

Let $\mathcal{T}_h$ be a partition of the domain $\Omega$ into non-overlapping subdomains/elements, each with Lipschitz continuous boundary.
Here $h$ denotes the characteristic size of the partition, which will be defined in details later.
The interior interfaces are denoted by $e = \bar{K_1}\cap\bar{K_2}$, where $K_1$, $K_2\in \mathcal{T}_h$.
Boundary segments are similarly denoted by $e = \bar{K}\cap\partial\Omega$, where $K\in \mathcal{T}_h$.
Denote by $\mathcal{E}_h$ the set of all interior interfaces and boundary segments in $\mathcal{T}_h$,
and by $\mathcal{E}_h^0=\mathcal{E}_h\setminus\partial\Omega$ the set of all interior interfaces.
For every $K\in\mathcal{T}_h$, let $|K|$ be the area/volume of $K$,
and for every $e\in \mathcal{E}_h$, let $|e|$ be its length/area.
Denote $h_e$  the diameter of $e\in \mathcal{E}_h$ and $h_K$ the diameter of $K\in \mathcal{T}_h$.
Clearly, when $e\subset \partial K$, we have $h_e\le h_K$.
Finally, define $h=\max_{K\in \mathcal{T}_h} h_K$ to be the characteristic mesh size.

Notice that $\mathcal{T}_h$ defined above is a very general mesh/partition on $\Omega$, as we do not
specify the shape and conformal property of $K\in \mathcal{T}_h$.
The interior penalty discontinuous Galerkin (IPDG) method can be extended to
such a general mesh, without any modification of the formulation.
However, to ensure its approximation rate, certain conditions must be imposed
on $\mathcal{T}_h$ and the discrete function spaces.
In this paper, we are interested in discussing the minimum requirements of such conditions.
First, we shall give the formulation of the interior penalty discontinuous Galerkin method.

Let $V_K$ be a finite dimensional space of smooth functions defined on $K \in \mathcal{T}_h$. Define
$$
V_h=\{v\in L^2(\Omega):v|_K\in V_K,\textrm{ for all } K\in\mathcal{T}_h\},
$$
and
$$
V(h)=V_h+\left( H_0^1(\Omega)\cap \prod_{K\in\mathcal{T}_h} H^{r}(K) \right),\qquad \textrm{where }r>\frac{3}{2}.
$$
For any internal interface $e = \bar{K_1}\cap\bar{K_2} \in \mathcal{E}_h$,
let $\bn_1$ and $\bn_2$ be the unit outward normal vectors on $e$, associated with ${K_1}$ and ${K_2}$, respectively.
For $v\in V(h)$, define the average $\{\nabla v\}$ and jump $[v]$ on $e$ by
$$
\{\nabla v\} = \frac{1}{2}\left(\nabla v|_{K_1} + \nabla v|_{K_2} \right),\qquad
[v] = v|_{K_1} \bn_1 + v|_{K_2} \bn_2.
$$
On any boundary segment $e= \bar{K}\cap\partial\Omega$, the above definitions of average and jump need to be modified:
$$
\{\nabla v\} = \nabla v|_{K},\qquad  [v] = v|_{K} \bn_K,
$$
where $\bn_K$ is the unit outward normal vector on $e$ with respect to $K$.

Define a bilinear form on $V(h)\times V(h)$ by
$$
\begin{aligned}
  A(u,v) =& \sum_{K\in \mathcal{T}_h}(\nabla u,\nabla v)_K-\sum_{e\in\mathcal{E}_h}\langle\{\nabla u\},\, [v]\rangle_e \\
  &\quad -\delta \sum_{e\in\mathcal{E}_h}\langle\{\nabla v\},\, [u]\rangle_e + \alpha \sum_{e\in\mathcal{E}_h}\frac{1}{h_e} \langle[u],\, [v]\rangle_e,
\end{aligned}
$$
where $\delta = \pm 1,\, 0$ and $\alpha>0$.
when $\delta=1$, the bilinear form $A(\cdot,\cdot)$ is symmetric.
The constant $\alpha$ is usually required to be large enough, but still independent of the mesh size $h$,
in order to guarantee the well-posedness of the discontinuous Galerkin formulation. Details will be given later.

It is clear that the exact solution $u$ to Equation (\ref{eq:ellipticeq}) satisfies
\begin{equation} \label{eq:dg-exactsol}
A(u,v) = (f,v)\qquad\textrm{for all } v\in V_h,
\end{equation}
as $[u]$ vanishes on all $e\in\mathcal{E}_h$.
Hence the following interior penalty discontinuous Galerkin formulation is consistent with Equation (\ref{eq:ellipticeq}):
find $u_h \in V_h$ satisfying
\begin{equation} \label{eq:dg}
A(u_h,v) = (f,v)\qquad\textrm{for all } v\in V_h.
\end{equation}

Finally, we would like to point out that the formulation (\ref{eq:dg}) is computable,
as long as each finite dimensional space $V_K$ has a clearly defined and computable basis.
\section{Abstract theory}
Define a norm $\3bar\cdot\3bar$ on $V(h)$ as following:
\begin{eqnarray}
\3bar v\3bar^2=\sum_{K\in\mathcal{T}_h}\|\nabla v\|_K^2+\sum_{e\in\mathcal{E}_h}h_e\|\{\nabla v\}\|_e^2+\alpha\sum_{e\in\mathcal{E}_h}\frac{1}{h_e}\|[v]\|_e^2.
\end{eqnarray}
By the Poincar\'{e} inequality, $\3bar\cdot\3bar$ is obviously a well-posed norm on $V(h)$.

Next, we give a set of assumptions, which form the minimum requirements guaranteeing
the well-posedness and
the approximation properties of the interior penalty discontinuous Galerkin method.
\begin{itemize}
\item [{\bf I1}] (The trace inequality) There exists a positive constant $C_T$ such that for all $K\in\mathcal{T}_h$ and $\theta\in H^1(K)$, we have
  \begin{equation} \label{eq:TraceIn}
    \|\theta\|_{\partial K}^2\le C_{T}(h_K^{-1}\|\theta\|_K^2+h_K\|\nabla\theta\|_K^2).
  \end{equation}
\item [{\bf I2}] (The inverse inequality) There exists a positive constant $C_I$ such that for all $K\in\mathcal{T}_h$,
  $\phi\in V_K$ and $\phi\in \frac{\partial}{\partial x_i}V_K$ where $i=1,\ldots, d$, we have
  \begin{equation} \label{eq:InverseIn}
    \|\nabla\phi\|_K\le C_I\, h_K^{-1}\|\phi\|_K.
  \end{equation}
\item [{\bf I3}] (The approximability) There exist positive constants $s$ and $C_A$ such that
  for all $v\in H^{s+1}(\Omega)$, we have
  \begin{equation} \label{eq:approximability}
    \inf_{\chi_h\in V_h} \3bar v-\chi_h \3bar \le C_A \left(\sum_{K\in \mathcal{T}_h} h_K^{2s} \|v\|_{s+1,K}^2\right)^{1/2}.
  \end{equation}
\end{itemize}

The abstract theory of the interior penalty discontinuous Galerkin method can be entirely based on Assumptions {\bf{I1}}-{\bf{I3}}.

\begin{lemma} \label{lem:wellposedness}
Assume {\bf{I1}}-{\bf{I2}} hold. The bilinear form $A(\cdot,\cdot)$ is bounded in $V(h)$, with respect to the norm $\3bar\cdot\3bar$.
Indeed,
$$
A(u,v)\le \frac{1+\alpha}{\alpha} \3bar u\3bar\, \3bar v\3bar\qquad \textrm{for all } u,\, v\in V(h).
$$
Furthermore, denote $C_1 = C_T(1+C_I)^2$.
Then for any constant $0<C<1$ and $\alpha \ge \frac{(1+\delta)^2C_1}{4(1-C)^2}$, the bilinear form $A(\cdot,\cdot)$ is coercive on $V_h$.
That is,
$$
  A(v, v) \ge \frac{C}{1+C_1} \3bar v\3bar^2\qquad  \textrm{for all } v\in V_h.
$$
\end{lemma}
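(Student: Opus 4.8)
\medskip\noindent\emph{Proof plan.}
The plan for the boundedness is a term-by-term application of the Cauchy--Schwarz inequality, using only the definition of $\3bar\cdot\3bar$. Set
\[
a_1^2=\sum_{K\in\mathcal{T}_h}\|\nabla u\|_K^2,\qquad a_2^2=\sum_{e\in\mathcal{E}_h}h_e\|\{\nabla u\}\|_e^2,\qquad a_3^2=\alpha\sum_{e\in\mathcal{E}_h}\frac1{h_e}\|[u]\|_e^2,
\]
so that $\3bar u\3bar^2=a_1^2+a_2^2+a_3^2$, and define $b_1,b_2,b_3$ analogously for $v$. Estimating each of the four terms of $A(u,v)$ separately (inserting the factors $h_e^{1/2}h_e^{-1/2}$ in the interface integrals, and using $|\delta|\le1$) gives
\[
|A(u,v)|\le a_1b_1+\frac1{\sqrt\alpha}\,a_2b_3+\frac1{\sqrt\alpha}\,a_3b_2+a_3b_3 .
\]
I would then group $a_1b_1+a_3b_3\le\sqrt{a_1^2+a_3^2}\,\sqrt{b_1^2+b_3^2}$ and $a_2b_3+a_3b_2\le\sqrt{a_2^2+a_3^2}\,\sqrt{b_2^2+b_3^2}$, apply Cauchy--Schwarz once more to the resulting two-term sum, and note that each coefficient that appears ($1$, $1/\alpha$, $1+1/\alpha$) is bounded by $(1+\alpha)/\alpha$; this delivers $|A(u,v)|\le\frac{1+\alpha}{\alpha}\3bar u\3bar\,\3bar v\3bar$.

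For the coercivity, fix $v\in V_h$. The two off-diagonal interface terms coincide, so
\[
A(v,v)=\sum_{K\in\mathcal{T}_h}\|\nabla v\|_K^2-(1+\delta)\sum_{e\in\mathcal{E}_h}\langle\{\nabla v\},[v]\rangle_e+\alpha\sum_{e\in\mathcal{E}_h}\frac1{h_e}\|[v]\|_e^2 .
\]
The heart of the proof is the discrete trace estimate
\[
\sum_{e\in\mathcal{E}_h}h_e\|\{\nabla v\}\|_e^2\le C_1\sum_{K\in\mathcal{T}_h}\|\nabla v\|_K^2\qquad(v\in V_h),\qquad C_1=C_T(1+C_I)^2 .
\]
To prove it, I would first observe that, since $V_K$ consists of smooth functions, each component $\partial v/\partial x_i$ of $\nabla v|_K$ belongs to $H^1(K)$; applying {\bf I1} in the form $\|\theta\|_{\partial K}\le\sqrt{C_T}\,(h_K^{-1/2}\|\theta\|_K+h_K^{1/2}\|\nabla\theta\|_K)$ with $\theta=\partial v/\partial x_i$, and then {\bf I2} (with $\phi=\partial v/\partial x_i\in\frac{\partial}{\partial x_i}V_K$) to replace $h_K^{1/2}\|\nabla(\partial v/\partial x_i)\|_K$ by $C_I\,h_K^{-1/2}\|\partial v/\partial x_i\|_K$, one obtains $\|\nabla v\|_{\partial K}^2\le C_1 h_K^{-1}\|\nabla v\|_K^2$. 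Summing over $\mathcal{T}_h$ and using $h_e\le h_K$ for $e\subset\partial K$, the convexity bound $\|\{\nabla v\}\|_e^2\le\frac12(\|\nabla v|_{K_1}\|_e^2+\|\nabla v|_{K_2}\|_e^2)$ on interior edges (whose factor $\frac12$ exactly accounts for each such edge being counted from both adjacent elements), and $\|\{\nabla v\}\|_e^2=\|\nabla v|_K\|_e^2$ on boundary edges, then yields the estimate.

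Granting this, write $G^2=\sum_{K}\|\nabla v\|_K^2$ and $J^2=\sum_e h_e^{-1}\|[v]\|_e^2$. Cauchy--Schwarz on the middle term of $A(v,v)$ together with the discrete trace estimate gives $\bigl|(1+\delta)\sum_e\langle\{\nabla v\},[v]\rangle_e\bigr|\le(1+\delta)\sqrt{C_1}\,GJ$, and Young's inequality $(1+\delta)\sqrt{C_1}\,GJ\le(1-C)G^2+\frac{(1+\delta)^2C_1}{4(1-C)}J^2$ leads to
\[
A(v,v)\ge C\,G^2+\Bigl(\alpha-\frac{(1+\delta)^2C_1}{4(1-C)}\Bigr)J^2 .
\]
The hypothesis $\alpha\ge\frac{(1+\delta)^2C_1}{4(1-C)^2}$ forces $\frac{(1+\delta)^2C_1}{4(1-C)}\le(1-C)\alpha$, so the coefficient of $J^2$ is at least $C\alpha$, whence $A(v,v)\ge C(G^2+\alpha J^2)$; invoking the discrete trace estimate a second time, $\3bar v\3bar^2=G^2+\sum_e h_e\|\{\nabla v\}\|_e^2+\alpha J^2\le(1+C_1)(G^2+\alpha J^2)$, and the claimed bound $A(v,v)\ge\frac{C}{1+C_1}\3bar v\3bar^2$ follows. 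I expect the discrete trace estimate to be the only real obstacle: it is the single point where {\bf I1}, {\bf I2}, and the discrete structure of $V_h$ (needed to control $D^2v$ by $\nabla v$) all enter, while everything else reduces to routine Cauchy--Schwarz and Young bookkeeping.
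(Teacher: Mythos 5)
Your coercivity argument is essentially the paper's own proof. You establish the same key discrete trace estimate $\sum_{e\in\mathcal{E}_h}h_e\|\{\nabla v\}\|_e^2\le C_1\sum_{K\in\mathcal{T}_h}\|\nabla v\|_K^2$ by applying \textbf{I1} to the components of $\nabla v$ and \textbf{I2} to eliminate the second derivatives (this is exactly why \textbf{I2} is stated for $\phi\in\frac{\partial}{\partial x_i}V_K$), then treat the cross term by Cauchy--Schwarz and Young, and finish with $\3bar v\3bar^2\le(1+C_1)\bigl(G^2+\alpha J^2\bigr)$; the paper does the same, merely parametrizing Young's inequality by $\varepsilon=\frac{1-C}{(1+\delta)C_1}$, which is your choice in disguise, and your verification that $\alpha\ge\frac{(1+\delta)^2C_1}{4(1-C)^2}$ makes the $J^2$-coefficient at least $C\alpha$ is correct. (Incidentally, your constant $C_T(1+C_I)^2$ agrees with the lemma's definition of $C_1$, whereas the paper's displayed computation actually yields the slightly smaller $C_T(1+C_I^2)$; either is fine.)

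The one genuine gap is in the boundedness constant. Your term-by-term estimate $|A(u,v)|\le a_1b_1+\frac{1}{\sqrt\alpha}(a_2b_3+a_3b_2)+a_3b_3$ is correct, but the grouping you describe does not deliver $\frac{1+\alpha}{\alpha}$: splitting the factor $\frac1{\sqrt\alpha}$ symmetrically in the final Cauchy--Schwarz gives the constant $1+\frac1{\sqrt\alpha}$, and splitting it asymmetrically produces the coefficients $1,\frac1\alpha,1+\frac1\alpha$ you list on the $u$-side but the quadratic form $b_1^2+b_2^2+2b_3^2$ on the $v$-side, whose coefficient $2$ is not bounded by $\frac{1+\alpha}{\alpha}$; either way the resulting constant ($1+\alpha^{-1/2}$ or $\sqrt{2(1+\alpha)/\alpha}$) exceeds $\frac{1+\alpha}{\alpha}$ exactly in the relevant regime $\alpha>1$ (e.g.\ $\alpha=10$ in the paper's experiments). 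The stated constant is attainable with a slightly sharper grouping: bound the whole off-diagonal block at once via
\[
\frac{1}{\sqrt\alpha}a_2b_3+\frac{1}{\sqrt\alpha}a_3b_2+a_3b_3\le\Bigl(\tfrac1\alpha+\tfrac1\alpha+1\Bigr)^{1/2}\sqrt{a_2^2+a_3^2}\,\sqrt{b_2^2+b_3^2},
\]
then use $a_1b_1+\sqrt{1+2/\alpha}\,\sqrt{a_2^2+a_3^2}\sqrt{b_2^2+b_3^2}\le\sqrt{1+2/\alpha}\,\3bar u\3bar\,\3bar v\3bar$ and $\sqrt{1+2/\alpha}\le 1+\frac1\alpha$. (The paper itself omits this step, saying boundedness ``follows immediately from the Schwarz inequality,'' and only the existence of an $h$-independent continuity constant is used downstream; still, as written your bookkeeping does not prove the inequality stated in the lemma.)
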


\begin{proof}
  The boundedness of $A(\cdot,\cdot)$ follows immediately from the Schwarz inequality.
Here we only prove the coercivity.
First, notice that for all $v\in V_h$, by assumptions {\bf{I1}}-{\bf{I2}} and the fact that
$h_e\le h_K$ for all $e\in \partial K\cap \mathcal{E}_h$,
$$
\begin{aligned}
\sum_{e\in\mathcal{E}_h}h_e\|\{\nabla v\}\|_e^2 & \le \sum_{K\in\mathcal{T}_h} \left(\sum_{e\in \partial K\cap \mathcal{E}_h} h_e \|\nabla v\|_e^2\right) \\
&\le \sum_{K\in\mathcal{T}_h} h_K \|\nabla v\|_{\partial K}^2  \\
& \le  \sum_{K\in\mathcal{T}_h} h_K  \bigg( C_T(1+C_I^2) h_K^{-1} \|\nabla v\|_K^2 \bigg) \\
& = C_1 \sum_{K\in\mathcal{T}_h} \|\nabla v\|_K^2.
\end{aligned}
$$
Then, by the Schwarz inequality, the Young's inequality and assumptions {\bf{I1}}-{\bf{I2}}, we have
$$
\begin{aligned}
\sum_{e\in\mathcal{E}_h}\langle\{\nabla v\},\, [v]\rangle_e &\le
   \varepsilon \sum_{e\in\mathcal{E}_h}h_e\|\{\nabla v\}\|_e^2 + \frac{1}{4\varepsilon} \sum_{e\in\mathcal{E}_h}\frac{1}{h_e} \|[v]\|_e^2 \\
  &\le  \varepsilon C_1 \sum_{K\in\mathcal{T}_h} \|\nabla v\|_K^2  + \frac{1}{4\varepsilon\alpha} \left(\alpha \sum_{e\in\mathcal{E}_h}\frac{1}{h_e} \|[v]\|_e^2 \right),
\end{aligned}
$$
where $\varepsilon$ is chosen to be $\frac{1-C}{(1+\delta)C_1}$ for any given constant $0<C<1$.
Clearly, for such an $\varepsilon$, we have $1-(1+\delta)\varepsilon C_1 = C$ and
$$
  1-\frac{1+\delta}{4\varepsilon\alpha} \ge C \quad \Longleftrightarrow \quad \alpha \ge \frac{(1+\delta)^2C_1}{4(1-C)^2}.
$$
Combine the above and let $\alpha \ge \frac{(1+\delta)^2C_1}{4(1-C)^2}$, we have
$$
\begin{aligned}
  A(v, v) &= \sum_{K\in\mathcal{T}_h} \|\nabla v\|_K^2 - (1+\delta) \sum_{e\in\mathcal{E}_h}\langle\{\nabla v\},\, [v]\rangle_e
    + \alpha \sum_{e\in\mathcal{E}_h}\frac{1}{h_e} \|[v]\|_e^2 \\
    &\ge \bigg( 1-(1+\delta)\varepsilon C_1\bigg)\sum_{K\in\mathcal{T}_h} \|\nabla v\|_K^2
    + \bigg(1-\frac{1+\delta}{4\varepsilon\alpha}\bigg)\alpha \sum_{e\in\mathcal{E}_h}\frac{1}{h_e} \|[v]\|_e^2 \\
    &\ge C \left( \sum_{K\in\mathcal{T}_h} \|\nabla v\|_K^2 +  \alpha \sum_{e\in\mathcal{E}_h}\frac{1}{h_e} \|[v]\|_e^2 \right)\\
    &\ge \frac{C}{1+C_1} \3bar v\3bar^2.
\end{aligned}
$$
\end{proof}

Lemma \ref{lem:wellposedness} guarantees the existence and uniqueness of the solution to
Equation (\ref{eq:dg}). In the rest of this paper, we shall always assume $\alpha \ge \frac{(1+\delta)^2C_1}{4(1-C)^2}$.
Let $u$ and $u_h$ be the solution to equations (\ref{eq:ellipticeq}) and (\ref{eq:dg}), respectively.
By subtracting (\ref{eq:dg-exactsol}) from (\ref{eq:ellipticeq}), one gets the standard orthogonality property of the error,
$$
A(u-u_h,\, v_h) = 0\qquad\textrm{for all } v\in V_h.
$$
Then clearly, for all $\chi_h\in V_h$,
$$
\begin{aligned}
\3bar \chi_h - u_h\3bar^2 &\le \frac{1+C_1}{C}A(\chi_h - u_h,\, \chi_h - u_h) \\
&= \frac{1+C_1}{C}A(\chi_h - u,\, \chi_h - u_h) \\
&\le \frac{(1+C_1)(1+\alpha)}{C\alpha} \3bar \chi_h - u\3bar \, \3bar \chi_h - u_h\3bar.
\end{aligned}
$$
Then, using the triangle inequality,
$$
\begin{aligned}
\3bar u-u_h\3bar &\le \inf_{\chi_h\in V_h} \bigg( \3bar u-\chi_h\3bar + \3bar \chi_h-u_h\3bar \bigg) \\
&\le \bigg(1 + \frac{(1+C_1)(1+\alpha)}{C\alpha}  \bigg) \inf_{\chi_h\in V_h}\3bar u-\chi_h\3bar.
\end{aligned}
$$
Combine this with assumption {\bf{I3}}, we get the following abstract error estimation:
\begin{theorem}
  Assume {\bf{I1}}-{\bf{I3}} hold, $C$ be a given constant in $(0,1)$ and $\alpha \ge \frac{(1+\delta)^2C_1}{4(1-C)^2}$.
Let $u$ and $u_h$ be the solution to equations (\ref{eq:ellipticeq}) and (\ref{eq:dg}), respectively.
Then
$$
\3bar u-u_h\3bar \lesssim C_A\bigg(1 +  \frac{(1+C_1)(1+\alpha)}{C\alpha} \bigg)
  \left(\sum_{K\in \mathcal{T}_h} h_K^{2s} \|u\|_{s+1,K}^2\right)^{1/2}.
$$
\end{theorem}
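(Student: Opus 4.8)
The plan is to run the standard Céa-type argument (the second Strang lemma for consistent discretizations), leaning entirely on Lemma~\ref{lem:wellposedness} and Assumption~\textbf{I3}; no new properties of $A(\cdot,\cdot)$ need to be established. The first step is to record Galerkin orthogonality of the error: because $u\in H_0^1(\Omega)\cap\prod_{K\in\mathcal{T}_h}H^r(K)$ with $r>3/2$, the exact solution lies in $V(h)$ and, by the consistency identity (\ref{eq:dg-exactsol}), satisfies $A(u,v)=(f,v)$ for every $v\in V_h$; subtracting (\ref{eq:dg}) gives $A(u-u_h,v)=0$ for all $v\in V_h$.

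Next I would fix an arbitrary $\chi_h\in V_h$, set $w:=\chi_h-u_h\in V_h$, and chain the two bounds from Lemma~\ref{lem:wellposedness}: coercivity on $V_h$ gives $\frac{C}{1+C_1}\3bar w\3bar^2\le A(w,w)$; splitting $w=(\chi_h-u)+(u-u_h)$ and using orthogonality removes the $A(u-u_h,w)$ term, so $A(w,w)=A(\chi_h-u,w)$; and boundedness gives $A(\chi_h-u,w)\le\frac{1+\alpha}{\alpha}\3bar\chi_h-u\3bar\,\3bar w\3bar$. Cancelling one factor of $\3bar w\3bar$ yields $\3bar\chi_h-u_h\3bar\le\frac{(1+C_1)(1+\alpha)}{C\alpha}\3bar\chi_h-u\3bar$, and then the triangle inequality
$$
\3bar u-u_h\3bar\le\3bar u-\chi_h\3bar+\3bar\chi_h-u_h\3bar\le\Big(1+\frac{(1+C_1)(1+\alpha)}{C\alpha}\Big)\3bar u-\chi_h\3bar
$$
holds for every $\chi_h\in V_h$. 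Passing to the infimum over $V_h$ and invoking Assumption~\textbf{I3} with $v=u$ replaces $\inf_{\chi_h\in V_h}\3bar u-\chi_h\3bar$ by $C_A\big(\sum_{K\in\mathcal{T}_h}h_K^{2s}\|u\|_{s+1,K}^2\big)^{1/2}$, which is exactly the claimed bound.

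I do not expect a genuine obstacle: boundedness, coercivity on $V_h$, and consistency have all already been proved, so what remains is bookkeeping of constants. The two points that deserve a word of care are (i) coercivity is only available on $V_h$, which is precisely why the estimate is first carried out for $\chi_h-u_h$ and then transferred to $u-u_h$ by the triangle inequality, and (ii) applying \textbf{I3} presupposes $u\in H^{s+1}(\Omega)$, which is compatible with — and for $s$ small enough (so that $s+1\le r$) follows from — the standing regularity assumption (\ref{eq:regularity}).
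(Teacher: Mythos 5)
Your proposal is correct and follows essentially the same argument as the paper: Galerkin orthogonality from consistency, coercivity applied to $\chi_h-u_h$ combined with boundedness to get the quasi-optimality bound, then the triangle inequality and Assumption \textbf{I3}. No discrepancies of substance.
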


Finally, we derive the $L^2$ error estimation by using the standard duality argument.
Let $\delta=1$, that is, the bilinear form $A(\cdot,\cdot)$ is symmetric. Consider the following problem
$$
\begin{cases}
-\Delta \phi=u-u_h\qquad &\mbox{in }\Omega,\\
\phi=0 &\mbox{on }\partial\Omega.
\end{cases}
$$
Here again, we assume that the domain $\Omega$ satisfies certain condition such that $\phi$ has $H^r$ regularity, with $r>3/2$.
Let $\phi_h\in V_h$ be an approximation to $\phi$ such that they satisfy Assumption {\bf{I3}}.
Clearly
$$
\begin{aligned}
  \|u-u_h\|^2 &= (-\Delta \phi,\, u-u_h) = \sum_{K\in \mathcal{T}_h}(\nabla \phi,\nabla (u-u_h))_K-\sum_{e\in\mathcal{E}_h}\langle\{\nabla \phi\},\, [u-u_h]\rangle_e \\
  &= A(\phi, \, u-u_h) = A(\phi-\phi_h,\, u-u_h)\\
  &\le \frac{1+\alpha}{\alpha} \3bar \phi-\phi_h \3bar\, \3bar u-u_h\3bar \\
  &\le \frac{1+\alpha}{\alpha}C_A  \left(\sum_{K\in \mathcal{T}_h} h_K^{2\min\{r-1,s\}} \|\phi\|_{\min\{r,s+1\},K}^2\right)^{1/2} \3bar u-u_h\3bar.
\end{aligned}
$$
This gives the following theorem
\begin{theorem}
  Assume {\bf{I1}}-{\bf{I3}} hold, $\delta=1$, $C$ be a given constant in $(0,1)$, $\alpha \ge \frac{(1+\delta)^2C_1}{4(1-C)^2}$, and
the elliptic equation (\ref{eq:ellipticeq}) has $H^r$ regularity with $r>3/2$.
Let $u$ and $u_h$ be the solution to equations (\ref{eq:ellipticeq}) and (\ref{eq:dg}), respectively.
Then
$$
\|u-u_h\| \le \frac{1+\alpha}{\alpha}C_AC_R h^{\min\{r-1,s\}} \3bar u-u_h\3bar.
$$
\end{theorem}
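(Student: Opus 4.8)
The plan is to carry out the Aubin--Nitsche duality argument, adapted to the nonconforming space $V(h)$. First I would set up the dual problem: let $\phi\in H_0^1(\Omega)$ solve $-\Delta\phi=u-u_h$ in $\Omega$ with $\phi=0$ on $\partial\Omega$. By the postulated $H^r$ regularity (with the same $\Omega$), $\phi\in H^r(\Omega)$ and $\|\phi\|_r\le C_R\|u-u_h\|$. Since $r>3/2$, the trace theorem gives $\nabla\phi\cdot\bn\in L^2(\partial K)$ for every $K\in\mathcal{T}_h$, so $\phi\in V(h)$ and all the interface integrals appearing in $A(\phi,\cdot)$ are well defined.

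Next I would establish the identity $\|u-u_h\|^2=A(\phi,u-u_h)$. Starting from $\|u-u_h\|^2=(-\Delta\phi,\,u-u_h)$ and integrating by parts on each element $K$, one obtains $\sum_K(\nabla\phi,\nabla(u-u_h))_K$ minus boundary contributions $\sum_K\langle\nabla\phi\cdot\bn_K,(u-u_h)|_K\rangle_{\partial K}$. Because $\phi\in H^1(\Omega)$ its normal derivative has a single-valued trace on each interface, so upon summing over $K$ the boundary contributions collapse to $-\sum_{e\in\mathcal{E}_h}\langle\{\nabla\phi\},[u-u_h]\rangle_e$ --- this is exactly where the bookkeeping of the two opposite outward normals forces the jump notation. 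Since moreover $\phi\in H_0^1(\Omega)$, we have $[\phi]=0$ on every $e\in\mathcal{E}_h$, including boundary segments (where $\phi|_{\partial\Omega}=0$); hence the adjoint-consistency term $-\delta\sum_e\langle\{\nabla(u-u_h)\},[\phi]\rangle_e$ and the penalty term $\alpha\sum_e h_e^{-1}\langle[\phi],[u-u_h]\rangle_e$ both vanish, and $A(\phi,u-u_h)$ collapses precisely to the right-hand side of the integration-by-parts identity.

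Then I would invoke Galerkin orthogonality. Subtracting (\ref{eq:dg}) from (\ref{eq:dg-exactsol}) gives $A(u-u_h,v_h)=0$ for all $v_h\in V_h$. This is the place where the hypothesis $\delta=1$ is essential: with $\delta=1$ the form $A(\cdot,\cdot)$ is symmetric, so for any $\phi_h\in V_h$ we also have $A(\phi_h,u-u_h)=A(u-u_h,\phi_h)=0$, and therefore $\|u-u_h\|^2=A(\phi-\phi_h,\,u-u_h)$. Applying the boundedness estimate of Lemma \ref{lem:wellposedness}, $\|u-u_h\|^2\le\frac{1+\alpha}{\alpha}\3bar\phi-\phi_h\3bar\,\3bar u-u_h\3bar$. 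Now pick $\phi_h\in V_h$ realizing the infimum in Assumption {\bf I3} for $\phi$; since $\phi\in H^{\min\{r,s+1\}}(\Omega)$, Assumption {\bf I3} (in its reduced-order form) yields $\3bar\phi-\phi_h\3bar\le C_A\big(\sum_K h_K^{2\min\{r-1,s\}}\|\phi\|_{\min\{r,s+1\},K}^2\big)^{1/2}\le C_A h^{\min\{r-1,s\}}\|\phi\|_r$, using $h_K\le h$, $\min\{r,s+1\}\le r$, and additivity of the Sobolev norm over $\mathcal{T}_h$. Substituting $\|\phi\|_r\le C_R\|u-u_h\|$ and dividing through by $\|u-u_h\|$ gives the asserted bound.

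I expect the only delicate step to be the identity $\|u-u_h\|^2=A(\phi,u-u_h)$: one must track the orientation of the two outward normals on an interior interface so that the element boundary contributions assemble into $-\langle\{\nabla\phi\},[u-u_h]\rangle_e$ rather than some other combination, and one must justify the element-wise integration by parts and the trace statements, which is precisely what $r>3/2$ guarantees. The remaining steps are routine applications of Lemma \ref{lem:wellposedness}, Galerkin orthogonality, and Assumption {\bf I3}, so no further difficulty is anticipated.
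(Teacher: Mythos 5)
Your proposal is correct and follows essentially the same duality (Aubin--Nitsche) argument as the paper: the same dual problem, the identity $\|u-u_h\|^2=A(\phi,u-u_h)$, Galerkin orthogonality via symmetry ($\delta=1$), the boundedness constant $\frac{1+\alpha}{\alpha}$, Assumption {\bf I3} for $\phi$, and the regularity bound $\|\phi\|_r\le C_R\|u-u_h\|$. You merely spell out the integration-by-parts bookkeeping and the role of $\delta=1$ more explicitly than the paper does, which is fine.
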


\section{Requirements on meshes and discrete spaces}
On triangular or quadrilateral meshes, the usual tool for proving assumptions {\bf{I1}}-{\bf{I3}} is to use a scaling
argument built on affine transformations. However, on general polygons and polyhedrons, it is not clear how to define such affine transformations. The assumptions  {\bf{I1}}-{\bf{I3}} were first validated in \cite{WangYe2012} for general polygonal and polyhedral meshes that satisfy a set of conditions introduced in \cite{WangYe2012}. Such conditions can be stated as follows.

All the elements of $\mathcal{T}_h$ are assumed to be closed and simply connected polygons or polyhedrons.
We make the following shape regularity assumptions for the partition $\mathcal{T}_h$.

\begin{itemize}
\item[{\bf A1:}] Assume that there exist two positive constants $\rho_v$ and $\rho_e$ such that for every element $K\in\mathcal{T}_h$ and $e\in \mathcal{E}_h$,
we have
\begin{eqnarray}
\rho_vh_K^d\le |K|,\ \ \rho_eh_e^{d-1}\le |e|.
\end{eqnarray}
\item[{\bf A2:}] Assume that there exists a positive constant $\kappa$ such that for every element $K\in\mathcal{T}_h$ and $e\in \partial K\cap \mathcal{E}_h$,
 we have
\begin{eqnarray}
\kappa h_K\le h_e.
\end{eqnarray}
\item[{\bf A3:}] Assume that for every $K\in\mathcal{T}_h,$ and $e\in\partial K \cap \mathcal{E}_h$,
there exists a pyramid $P(e,K,A_e)$ contained in $K$ such that its base is identical with $e$, its apex is $A_e\in K$,
and its height is proportional to $h_K$ with a proportionality constant $\sigma_e$ bounded away from a fixed positive number $\sigma^*$ from below.
In other words, the height of the pyramid is given by $\sigma_eh_K$ such that $\sigma_e\ge \sigma^*>0.$
The pyramid is also assumed to stand up above the base $e$ in the sense that the angle between the vector ${\bf x}_e-A_e,$ for any ${\bf x}_e\in e$,
and the outward normal direction of $e$ is strictly acute by falling into an interval $[0,\theta_0]$ with $\theta_0<\pi/2$.
\item[{\bf A4:}] Assume that each $K\in\mathcal{T}_h$ has a circumscribed simplex $S(K)$ that is shape regular
and has a diameter $h_{S(K)}$ proportional to the diameter of $K$; i.e., $h_{S(K)}\le\gamma_*h_K$ with a constant $\gamma_*$ independent of $K.$
Furthermore, assume that each circumscribed simplex $S(K)$ intersects with only a fixed and small number of such simplexes
for all other elements $K\in\mathcal{T}_h.$
\end{itemize}

Under the above assumptions, the following results have been proved in \cite{WangYe2012}:

\begin{lemma}
(The trace inequality). Assume {\bf{A1}}-{\bf{A3}} hold on a polygonal or polyhedral mesh. Then {\bf{I1}} is true.
\end{lemma}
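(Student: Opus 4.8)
The plan is to derive the trace inequality \eqref{eq:TraceIn} directly from Assumption \textbf{A3}, using a Gauss--Green identity on the pyramids it supplies; no affine reference element is needed. Since both sides of \eqref{eq:TraceIn} depend continuously on $\theta$ in the $H^1(K)$-norm, it suffices by density to prove the estimate for smooth $\theta$, the general case following by approximation.

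Fix $K\in\T_h$ and a face $e\subset\pK\cap\E_h$, and let $P=P(e,K,A_e)\subset K$ be the pyramid from \textbf{A3}, with base $e$, apex $A_e\in K$, and height $\sigma_e h_K$ where $\sigma_e\ge\sigma^*>0$. On $P$ introduce the linear field $\bq(\bx)=\bx-A_e$, for which $\nabla\cdot\bq=d$ and $|\bq|\le h_K$, the latter because $P\subset K$ has diameter $h_K$. Each lateral face of $P$ lies in an affine hyperplane through $A_e$, so $\bx-A_e$ is parallel to that face; hence $\bq$ is orthogonal to its outward normal and the lateral faces contribute nothing to $\int_{\partial P}\theta^2(\bq\cdot\bn)$. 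On the base $e$, writing $\bn$ for the outward normal of $P$ along $e$ (which coincides with the outward normal of $K$ there), the quantity $(\bx-A_e)\cdot\bn$ is independent of $\bx\in e$ and equals the distance from $A_e$ to the hyperplane of $e$, i.e. the pyramid height; thus $\bq\cdot\bn=\sigma_e h_K\ge\sigma^* h_K$ on $e$. Applying the divergence theorem to $\theta^2\bq$ on $P$ (classical, as $\theta$ may be taken smooth) and using these facts,
$$
\sigma^*h_K\,\|\theta\|_e^2\;\le\;\sigma_e h_K\,\|\theta\|_e^2\;=\;\int_P\nabla\cdot(\theta^2\bq)\,d\bx\;=\;\int_P\bigl(2\,\theta\,\nabla\theta\cdot\bq+d\,\theta^2\bigr)\,d\bx .
$$
Bounding $|\bq|\le h_K$, then using Cauchy--Schwarz together with the weighted Young inequality $2h_K\|\theta\|_P\|\nabla\theta\|_P\le\|\theta\|_P^2+h_K^2\|\nabla\theta\|_P^2$, and finally enlarging $P$ to $K$, I obtain the single-face bound
$$
\|\theta\|_e^2\;\le\;\frac{d+1}{\sigma^*}\,h_K^{-1}\|\theta\|_K^2+\frac{1}{\sigma^*}\,h_K\|\nabla\theta\|_K^2 .
$$

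Summing over all faces of $K$ via $\|\theta\|_{\pK}^2=\sum_{e\subset\pK}\|\theta\|_e^2$ then yields \textbf{I1} with $C_T=(d+1)N/\sigma^*$, provided the number of faces of each $K\in\T_h$ is bounded by a constant $N$ independent of $K$ and $h$. Such a bound is itself part of the shape-regularity framework: by \textbf{A1}--\textbf{A2} each face satisfies $|e|\ge\rho_e\kappa^{d-1}h_K^{d-1}$, while \textbf{A3} attaches a nondegenerate pyramid inside $K$ to every face, so that, relative to $|K|\lesssim h_K^d$, an element cannot accumulate arbitrarily many faces; this is established in \cite{WangYe2012}, and here I simply invoke it.

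The density reduction, the Gauss--Green identity on the Lipschitz domain $P$, and the Young estimate are routine. The one substantive point, and the place where the polygonal/polyhedral generality truly bites, is geometric: producing, on a controlled subregion of $K$, a vector field that is normal-vanishing on all boundary faces but one and is uniformly bounded below on the remaining face, with constants depending only on the shape-regularity parameters. Assumption \textbf{A3} is precisely the hypothesis furnishing such a field — $\bq=\bx-A_e$ on $P(e,K,A_e)$ — and its lower bound $\sigma_e\ge\sigma^*$ on the pyramid height is exactly what keeps $\bq\cdot\bn$ uniformly positive on $e$. I expect the cleanest handling of the face-count constant $N$ (or a localization that avoids introducing it) to be the most delicate piece of bookkeeping.
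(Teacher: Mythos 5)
Your single-face argument is sound and is, in spirit, the argument this lemma rests on: the paper itself gives no proof but defers to \cite{WangYe2012}, where the trace inequality is obtained face by face from the pyramid of \textbf{A3} by exactly the kind of divergence identity you use. The density reduction, the identity $\nabla\cdot\big(\theta^2(\bx-A_e)\big)=2\theta\,\nabla\theta\cdot(\bx-A_e)+d\,\theta^2$ on $P(e,K,A_e)$, the vanishing of $(\bx-A_e)\cdot\bn$ on the lateral boundary, the value $\sigma_e h_K\ge\sigma^* h_K$ on the base, and the Cauchy--Schwarz/Young step are all correct and yield $\|\theta\|_e^2\le \frac{d+1}{\sigma^*}\,h_K^{-1}\|\theta\|_K^2+\frac{1}{\sigma^*}\,h_K\|\nabla\theta\|_K^2$ with constants depending only on $d$ and $\sigma^*$, uniformly over elements and faces.

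The gap is the final summation. \textbf{I1} is stated for the whole boundary $\partial K$, and the paper genuinely uses it in that form (e.g.\ to bound $\sum_{e\subset\partial K\cap\mathcal{E}_h}h_e\|\nabla v\|_e^2$ by $h_K\|\nabla v\|_{\partial K}^2$), so your passage from the per-face bound to $\partial K$ multiplies $C_T$ by the number of faces $N_K$ of $K$, and you need $N_K$ uniformly bounded by the shape-regularity constants. Your justification for this---each face has $|e|\ge\rho_e\kappa^{d-1}h_K^{d-1}$ and carries a pyramid of volume $\gtrsim h_K^d$ inside $K$ with $|K|\lesssim h_K^d$---implicitly assumes the pyramids $P(e,K,A_e)$ attached to different faces are pairwise disjoint (or have uniformly bounded overlap), which \textbf{A3} does not assert: the apexes $A_e$ are face-dependent and the pyramids can all sweep through the middle of $K$. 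Nor can you simply ``invoke'' \cite{WangYe2012} for it: the trace inequality there is formulated face by face, and an explicit uniform bound on the number of faces per element is not among the stated consequences of \textbf{A1}--\textbf{A4}. To close the proof you must either (i) actually prove from \textbf{A1}--\textbf{A3} that $N_K$ (equivalently $|\partial K|/h_K^{d-1}$) is uniformly bounded, or that the pyramids have bounded overlap multiplicity, or (ii) restructure so no face count appears, e.g.\ by establishing star-shapedness of $K$ with respect to a ball of radius $\simeq h_K$ and applying your divergence identity once on all of $K$ with a single field $\bx-\bx_K$, whose normal component is then bounded below on all of $\partial K$. As written, the uniformity of $C_T$ in the number of faces is precisely the point left unproved.
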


\begin{lemma}
(The inverse Inequality). Assume {\bf{A1}}-{\bf{A4}} hold on a polygonal or polyhedral mesh and each $V_K$ is the space of
polynomials with degree less than or equal to $n$. Then {\bf{I2}} is true with $C_I$ depending on $n$, but not on $h_K$ or $|K|$.
\end{lemma}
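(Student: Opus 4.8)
\medskip\noindent\textbf{Proof plan.} The idea is to move the estimate from the general element $K$ onto the shape–regular \emph{simplex} $S(K)$ provided by Assumption~\textbf{A4}, where the classical affine scaling argument works verbatim, and then to repair the absence of an affine structure on $K$ itself by comparing directly the $L^2$ norms of polynomials over $K$ and over $S(K)$. Write $P_n$ for the space of polynomials of total degree at most $n$ on $\mathbb{R}^d$. Since $V_K=P_n$ we have $\frac{\partial}{\partial x_i}V_K\subseteq V_K$, so it suffices to prove $\|\nabla\phi\|_K\le C_I h_K^{-1}\|\phi\|_K$ for every $\phi\in P_n$.

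\emph{Step 1 (the simplex).} On a fixed reference simplex $\widehat S$, equivalence of norms on the finite–dimensional space $P_n$ yields a constant $\widehat C_I=\widehat C_I(n,d)$ with $\|\nabla\widehat\phi\|_{\widehat S}\le\widehat C_I\|\widehat\phi\|_{\widehat S}$ for all $\widehat\phi\in P_n$. By \textbf{A4}, $S(K)$ is a shape–regular simplex of diameter $h_{S(K)}\le\gamma_* h_K$; let $F(\widehat x)=B\widehat x+c$ be the affine map from $\widehat S$ onto $S(K)$. Shape regularity bounds $\|B\|\le C h_{S(K)}$ and $\|B^{-1}\|\le C h_{S(K)}^{-1}$, and combining $\nabla_x\phi=B^{-T}\nabla_{\widehat x}(\phi\circ F)$ with $\|\phi\|_{S(K)}^2=|\det B|\,\|\phi\circ F\|_{\widehat S}^2$ gives
$$\|\nabla\phi\|_{S(K)}\le\widehat C_I\,\|B^{-1}\|\,\|\phi\|_{S(K)}\le C\,h_{S(K)}^{-1}\|\phi\|_{S(K)}\le C\,h_K^{-1}\|\phi\|_{S(K)},$$
the last step using $h_K\le h_{S(K)}$, which holds because $K\subset S(K)$. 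As $K\subset S(K)$ we also have $\|\nabla\phi\|_K\le\|\nabla\phi\|_{S(K)}$.

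\emph{Step 2 (from $S(K)$ back to $K$).} It remains to find a constant $C'$, independent of $K$, with $\|\phi\|_{S(K)}\le C'\|\phi\|_K$ for all $\phi\in P_n$. Assumption~\textbf{A1} gives $|K|\ge\rho_v h_K^d$, while shape regularity of $S(K)$ together with $h_{S(K)}\le\gamma_* h_K$ gives $|S(K)|\le C h_K^d$; hence $|K|/|S(K)|\ge c_0>0$, with $c_0$ depending only on $\rho_v,\gamma_*,d$. Pulling back through $F$ (so that $|F^{-1}(K)|/|\widehat S|=|K|/|S(K)|\ge c_0$), the claim reduces to the scale–invariant statement that there is $C'=C'(c_0,n,d)$ with $\|p\|_{L^2(\widehat S)}\le C'\|p\|_{L^2(E)}$ for every $p\in P_n$ and every measurable $E\subset\widehat S$ with $|E|\ge c_0|\widehat S|$. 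I would prove this by compactness: were it false there would be $p_j\in P_n$, $\|p_j\|_{L^2(\widehat S)}=1$, and measurable $E_j\subset\widehat S$, $|E_j|\ge c_0|\widehat S|$, with $\|p_j\|_{L^2(E_j)}\to0$; passing to a subsequence, $p_j\to p$ in $P_n$, hence uniformly on $\widehat S$, so $\|p\|_{L^2(\widehat S)}=1$ and $\int_{E_j}|p|^2\to0$. But $p\not\equiv0$, so its zero set is null and the bathtub principle gives $\inf\{\int_E|p|^2:\,E\subset\widehat S,\ |E|\ge c_0|\widehat S|\}>0$, a contradiction. Combining Steps~1 and 2, $\|\nabla\phi\|_K\le C h_K^{-1}\|\phi\|_{S(K)}\le CC' h_K^{-1}\|\phi\|_K$, which is \textbf{I2} with $C_I=\widehat C_I CC'$ depending on $n$ (and on the mesh–regularity constants), but not on $h_K$ or $|K|$.

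\emph{Main obstacle.} Step~1 is the textbook simplex argument; the only genuinely new point is the uniform comparison $\|\phi\|_{S(K)}\le C'\|\phi\|_K$ of Step~2. Since $K$ has no affine rescaling, and since a polynomial may be small on a large subset, the mere volume bound $|K|/|S(K)|\ge c_0$ does not directly yield a norm bound; the compactness argument (or, equivalently, a multidimensional Remez inequality on $S(K)$) is what converts the volume bound into the required uniform norm bound.
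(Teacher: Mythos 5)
Your argument is correct. Note first that the paper itself contains no proof of this lemma: it is quoted as a result established in \cite{WangYe2012}, so the only meaningful comparison is with that reference, which likewise exploits the circumscribed simplex $S(K)$ of {\bf A4} --- exactly the role that assumption is designed to play. Your Step~1 is the standard affine scaling on the shape-regular simplex (together with the harmless reduction $\frac{\partial}{\partial x_i}P_n\subset P_n$ and $h_K\le h_{S(K)}$ from $K\subset S(K)$), and your Step~2 correctly identifies and resolves the only genuinely nontrivial point: converting the volume comparability $|K|\ge \rho_v h_K^d$ ({\bf A1}) and $|S(K)|\le C(\gamma_* h_K)^d$ ({\bf A4}) into the uniform polynomial norm comparison $\|\phi\|_{S(K)}\le C'\|\phi\|_K$. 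The compactness argument on the fixed reference simplex is sound: pulling $K$ back gives a measurable subset of $\widehat S$ of relative measure at least $c_0$, the unit sphere of $P_n$ in $L^2(\widehat S)$ is compact with $L^2$-convergence implying uniform convergence, and since a nonzero polynomial vanishes only on a null set, the infimum of $\int_E|p|^2$ over sets of measure $\ge c_0|\widehat S|$ is positive, yielding the contradiction; a multidimensional Remez inequality would serve equally well, as you remark. Two cosmetic observations: your constant depends (as it must) also on the shape-regularity parameters $\rho_v,\gamma_*$ and the regularity of $S(K)$, which is implicit in the lemma's statement that $C_I$ is independent only of $h_K$ and $|K|$; and you in fact only use {\bf A1} and {\bf A4}, which is consistent with, indeed slightly sharper than, the stated hypothesis {\bf A1}--{\bf A4}.
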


\begin{lemma} \label{lem:L2proj}
Assume {\bf{A1}}-{\bf{A4}} hold on a polygonal or polyhedral mesh and each $V_K$ is the space of
polynomials with degree less than or equal to $n$.
Let $Q_h$ be the $L^2$ projection onto $V_h$. Then for all $0\le s\le n$ and $v\in H^{s+1}(\Omega)$,
$$
\begin{aligned}
\sum_{K\in\mathcal{T}_h}\|v-Q_hv\|_K^2 &\le C_{Q0} h^{2(s+1)} \|v\|_{s+1}^2. \\
\sum_{K\in\mathcal{T}_h}\|\nabla (v-Q_hv)\|_K^2 &\le C_{Q1} h^{2s} \|v\|_{s+1}^2.
\end{aligned}
$$
\end{lemma}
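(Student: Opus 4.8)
The plan is to prove the two estimates in Lemma~\ref{lem:L2proj} by a standard chain: first establish approximation on each simplex $S(K)$ from the circumscribed-simplex assumption \textbf{A4}, then transfer the bound to $K$ itself, and finally compare the $L^2$ projection $Q_h$ onto $V_h$ against this local best approximant using the fact that $Q_h$ is, element by element, the $L^2$-orthogonal projection onto the polynomial space $V_K$. Concretely, for a fixed $v\in H^{s+1}(\Omega)$ and a fixed $K\in\mathcal{T}_h$, since $S(K)$ is shape regular with $h_{S(K)}\le\gamma_* h_K$, classical polynomial approximation theory (e.g.\ Bramble--Hilbert / Dupont--Scott on the shape-regular simplex) gives a polynomial $p_K$ of degree $\le n$ with
$$
\|v-p_K\|_{S(K)} \lesssim h_K^{\,s+1}\|v\|_{s+1,S(K)},\qquad
\|\nabla(v-p_K)\|_{S(K)} \lesssim h_K^{\,s}\|v\|_{s+1,S(K)} .
$$
Restricting to $K\subset S(K)$ only decreases the left-hand sides, so the same bounds hold with $S(K)$ replaced by $K$ on the left.

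Next I would use the optimality of the $L^2$ projection. On each $K$, $Q_h v|_K$ is the element of $V_K$ (polynomials of degree $\le n$) closest to $v$ in the $L^2(K)$ norm, so $\|v-Q_h v\|_K \le \|v-p_K\|_K \le \|v-p_K\|_{S(K)}$, which yields the first inequality after summing over $K$ and using that each $S(K)$ overlaps only a bounded number of the others (from \textbf{A4}), so that $\sum_K \|v\|_{s+1,S(K)}^2 \lesssim \|v\|_{s+1}^2$. For the gradient estimate one cannot use $L^2$-optimality directly for $\nabla$, so I would insert an inverse inequality: write $\nabla(v-Q_h v) = \nabla(v-p_K) + \nabla(p_K-Q_h v)$, bound the first term by the simplex estimate above, and for the second term, since $p_K-Q_h v|_K\in V_K$, apply Assumption \textbf{I2} (the inverse inequality, valid under \textbf{A1}--\textbf{A4} by the preceding lemma) to get
$$
\|\nabla(p_K-Q_h v)\|_K \le C_I h_K^{-1}\|p_K-Q_h v\|_K
\le C_I h_K^{-1}\big(\|p_K-v\|_K+\|v-Q_h v\|_K\big)
\le 2C_I h_K^{-1}\|v-p_K\|_{S(K)}\lesssim h_K^{\,s}\|v\|_{s+1,S(K)} ,
$$
using again $L^2$-optimality of $Q_h$ in the middle. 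Squaring, summing over $K$, and invoking the bounded-overlap property of \textbf{A4} gives the second inequality, with $h_K$ replaced by the global $h$ in the statement.

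The main obstacle, and the only place where the generality of the meshes really enters, is the very first step: obtaining a Bramble--Hilbert-type polynomial approximation estimate on $K$ with the right power of $h_K$, given that $K$ is an arbitrary polygon or polyhedron for which no reference element or affine map is available. The device of using the circumscribed shape-regular simplex $S(K)$ from \textbf{A4} is exactly what circumvents this: one does the approximation on $S(K)$ (where classical scaling arguments apply) and then restricts, paying nothing because $K\subset S(K)$; the price is the constant $\gamma_*$ and the requirement that the $S(K)$ have bounded overlap, which is why all of \textbf{A1}--\textbf{A4} are assumed. Everything else — optimality of $Q_h$, the inverse inequality, the triangle inequality, summation over elements — is routine and has already been set up in the preceding lemmas.
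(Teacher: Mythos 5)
First, note that the paper itself does not prove this lemma: it is quoted as a result established in \cite{WangYe2012}, so there is no in-paper argument to compare against line by line. Your outline is essentially the standard one used for such polytopal-mesh approximation results (and the one behind the cited reference): Bramble--Hilbert on the shape-regular circumscribed simplex $S(K)$ from \textbf{A4}, restriction to $K\subset S(K)$, element-wise $L^2$-optimality of $Q_h$ (legitimate here because $V_h$ is fully discontinuous, so $Q_h|_K$ is the $L^2(K)$-projection onto $V_K$), the inverse inequality \textbf{I2} for the gradient bound, and the finite-overlap property of the $S(K)$ to sum over elements. All of these steps are sound.

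There is, however, one genuine missing ingredient: for elements $K$ adjacent to $\partial\Omega$ (and possibly others), the circumscribed simplex $S(K)$ need not be contained in $\Omega$, so the quantities $\|v\|_{s+1,S(K)}$ and the polynomial approximation of $v$ on $S(K)$ are not defined for $v\in H^{s+1}(\Omega)$ as you use them, and the summation $\sum_K\|v\|_{s+1,S(K)}^2\lesssim\|v\|_{s+1}^2$ does not make sense as written. The standard repair, which is precisely where the Lipschitz regularity of $\Omega$ is used, is to invoke a Stein/Calder\'on extension operator producing $\tilde v\in H^{s+1}(\mathbb{R}^d)$ with $\tilde v|_\Omega=v$ and $\|\tilde v\|_{s+1,\mathbb{R}^d}\lesssim\|v\|_{s+1,\Omega}$; one then runs your argument with $\tilde v$ on each $S(K)$, restricts to $K$ where $\tilde v=v$, and uses the bounded overlap of the $S(K)$ to get $\sum_K\|\tilde v\|_{s+1,S(K)}^2\lesssim\|\tilde v\|_{s+1,\mathbb{R}^d}^2\lesssim\|v\|_{s+1,\Omega}^2$. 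With that extension step inserted (and, if $s$ is non-integer, a fractional-order Bramble--Hilbert or Dupont--Scott estimate on the shape-regular simplex, which you already allude to), your proof is complete and matches the intended argument.
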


It is not hard to see that {\bf{I3}} follows immediately from Lemma \ref{lem:L2proj}. Indeed, notice that as long as {\bf{I1}} and {\bf{I2}}
are true and $v\in H^r(\Omega)$ with $r>3/2$, we have
$$
\3bar v-Q_h v\3bar \le (1+C_1) \sum_{K\in\mathcal{T}_h}\|\nabla (v-Q_hv)\|_K^2 + \alpha\sum_{e\in\mathcal{E}_h}\frac{1}{h_e}\|[v-Q_h v]\|_e^2,
$$
where $C_1=C_T(1+C_I)^2$.
Next, notice that by {\bf{A2}}, {\bf{I1}} and Lemma \ref{lem:L2proj},
$$
\begin{aligned}
\sum_{e\in\mathcal{E}_h}\frac{1}{h_e}\|[v-Q_h v]\|_e^2 &\le \sum_{K\in\mathcal{T}_h} \left(\sum_{e\in \partial K\cap \mathcal{E}_h} \frac{1}{h_e} \|(v-Q_h v)|_K\|_e^2\right) \\
& \le \sum_{K\in\mathcal{T}_h} \left(\sum_{e\in \partial K\cap \mathcal{E}_h} \frac{1}{\kappa h_K} \|(v-Q_h v)|_K\|_e^2\right) \\
& = \sum_{K\in\mathcal{T}_h} \frac{1}{\kappa h_K} \|(v-Q_h v)|_K\|_{\partial K}^2  \\
& \le \sum_{K\in\mathcal{T}_h} \frac{C_T}{\kappa h_K} \left(h_K^{-1} \|v-Q_h v\|_K^2 + h_K\|\nabla(v-Q_h v)\|_K^2\right) \\
&\le \frac{C_T}{\kappa} (C_{Q0} + C_{Q1}) h^{2s} \|v\|_{s+1}^2.
\end{aligned}
$$
Combine the above, we have

\begin{lemma}
(The aproximability) Assume {\bf{A1}}-{\bf{A4}} hold on a polygonal or polyhedral mesh and each $V_K$ is the space of
polynomials with degree less than or equal to $n$. Then for all $\frac{1}{2}< s\le n$ and $v\in H^{s+1}(\Omega)$,
there exists a constant $C_A$ independent of $h$ such that
$$
\inf_{\chi_h\in V_h} \3bar v-\chi_h \3bar \le C_A h^{s} \|v\|_{s+1}.
$$
Here $s>\frac{1}{2}$ is added so that $\3bar v - \chi_h\3bar$ is well-defined.
\end{lemma}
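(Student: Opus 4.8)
The plan is to take the $L^2$ projection $\chi_h=Q_hv\in V_h$ as the competitor and estimate, one by one, the three pieces of
$$
\3bar v-Q_hv\3bar^2=\sum_{K\in\mathcal{T}_h}\|\nabla(v-Q_hv)\|_K^2+\sum_{e\in\mathcal{E}_h}h_e\|\{\nabla(v-Q_hv)\}\|_e^2+\alpha\sum_{e\in\mathcal{E}_h}\frac1{h_e}\|[v-Q_hv]\|_e^2,
$$
each by a constant multiple of $h^{2s}\|v\|_{s+1}^2$, and then take square roots and the infimum over $\chi_h\in V_h$. The first sum is handled at once by the second estimate of Lemma~\ref{lem:L2proj}, which gives $\sum_K\|\nabla(v-Q_hv)\|_K^2\le C_{Q1}h^{2s}\|v\|_{s+1}^2$.

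For the jump term I would use the triangle inequality on the definition of $[\cdot]$ to write $\|[v-Q_hv]\|_e\le\sum_{K:\,e\subset\partial K}\|(v-Q_hv)|_K\|_e$, distribute the sum over elements, use {\bf A2} in the form $h_e\ge\kappa h_K$, then the trace inequality {\bf I1} with $\theta=v-Q_hv\in H^1(K)$, and finally Lemma~\ref{lem:L2proj} for both $\|v-Q_hv\|_K$ and $\|\nabla(v-Q_hv)\|_K$; this is exactly the chain of inequalities displayed in the text before the statement, and it yields $\alpha\sum_e\frac1{h_e}\|[v-Q_hv]\|_e^2\lesssim h^{2s}\|v\|_{s+1}^2$.

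The average-gradient term $\sum_e h_e\|\{\nabla(v-Q_hv)\}\|_e^2$ is the one I expect to be the main obstacle. Distributing the average over the two adjacent elements reduces it to $\sum_K h_K\|\nabla(v-Q_hv)\|_{\partial K}^2$, but here one cannot simply run {\bf I1} followed by {\bf I2} as in the coercivity proof of Lemma~\ref{lem:wellposedness}, because that argument was carried out on $v\in V_h$ and $\nabla(v-Q_hv)$ is not a polynomial. The fix is to insert on each $K$ a local polynomial $\tilde v\in V_K$ with the usual Bramble--Hilbert approximation estimates and to split $\nabla(v-Q_hv)=\nabla(v-\tilde v)+\nabla(\tilde v-Q_hv)$: on the polynomial part one applies {\bf I1} and then {\bf I2} componentwise to $\partial_i(\tilde v-Q_hv)\in\frac{\partial}{\partial x_i}V_K$ — which is precisely why {\bf I2} is stated both for $V_K$ and for its derivative spaces — to get $h_K\|\nabla(\tilde v-Q_hv)\|_{\partial K}^2\lesssim\|\nabla(\tilde v-Q_hv)\|_K^2$, and then the triangle inequality together with Lemma~\ref{lem:L2proj} and the approximation property of $\tilde v$ makes this $O(h_K^{2s}\|v\|_{s+1,K}^2)$; on the remainder $v-\tilde v$, {\bf I1} together with the polynomial approximation estimates for $\tilde v$ does the rest. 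A subtlety to keep in mind is the low-regularity range $\tfrac12<s<1$, where $\nabla v\notin H^1(K)$ so {\bf I1} applied to $\nabla(v-\tilde v)$ must be replaced by a fractional trace inequality on $H^\sigma(K)$ with $\tfrac12<\sigma<1$ and a matching fractional approximation bound; the hypothesis $s>\tfrac12$ is exactly what guarantees that $\{\nabla v\}$, and hence $\3bar v-\chi_h\3bar$, is finite in the first place. Summing over $K$ and adding the three bounds gives $\3bar v-Q_hv\3bar^2\lesssim h^{2s}\|v\|_{s+1}^2$, which completes the argument.
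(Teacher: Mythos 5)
Your proposal is correct and follows the same skeleton as the paper: take $\chi_h = Q_h v$, bound the broken gradient term by the second estimate of Lemma \ref{lem:L2proj}, and bound the jump term by exactly the chain (triangle inequality, {\bf A2}, trace inequality {\bf I1}, Lemma \ref{lem:L2proj}) displayed in the text. Where you diverge is the edge-average term $\sum_e h_e\|\{\nabla(v-Q_hv)\}\|_e^2$. The paper disposes of it in one line, asserting $\3bar v-Q_hv\3bar^2 \le (1+C_1)\sum_K\|\nabla(v-Q_hv)\|_K^2 + \alpha\sum_e h_e^{-1}\|[v-Q_hv]\|_e^2$ ``as long as {\bf I1} and {\bf I2} are true''; this silently reuses the $C_1$-bound from the coercivity proof of Lemma \ref{lem:wellposedness}, which hinges on the inverse inequality {\bf I2} and is therefore literally valid only for functions in $V_h$, not for $v-Q_hv$. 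You correctly identify this as the delicate point and patch it by inserting a local polynomial approximant $\tilde v$, applying {\bf I1}+{\bf I2} to the polynomial difference $\tilde v-Q_hv$ and {\bf I1} plus Bramble--Hilbert estimates (fractional versions when $\tfrac12<s<1$) to $v-\tilde v$; on a general polygon or polyhedron the Bramble--Hilbert step should be justified through the circumscribed simplex of {\bf A4} with its finite-overlap property, which is also how Lemma \ref{lem:L2proj} is obtained in \cite{WangYe2012}. So your route costs a bit more machinery (an auxiliary approximant and a fractional trace inequality not among {\bf I1}--{\bf I3}), but it buys a genuinely rigorous and rate-optimal treatment of the average term for all $\tfrac12<s\le n$, whereas the paper's shortcut leaves that step implicit; the naive alternative of applying {\bf I1} directly to $\nabla(v-Q_hv)$ would require $v\in H^2$ and would cap the rate at $O(h)$ for $s>1$, so your extra splitting is in fact needed, not optional.
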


\section{Numerical Examples}

Finally, we present numerical results that support the theoretical analysis of this paper.
We fix the coefficients $\delta = 1$ and $\alpha = 10$, since the purpose
of the numerical experiments is to examine the accuracy of the interior penalty discontinuous Galerkin method
on arbitrary polygonal meshes, not for different coefficients.
Consider the Poisson's equation on $\Omega = (0,1)\times(0,1)$ with the exact solution
$u = sin(2\pi x)\cos(2\pi y)$. Clearly $u=0$ on $\partial\Omega$.
For simplicity of the notation, we denote
$$
|u-u_h|_{1,h} = \left(\sum_{K\in\mathcal{T}_h} |\nabla(u-u_h)|_K^2\right)^{1/2}.
$$

The first test is performed on a non-conformal triangular-quadrilateral hybrid mesh.
The initial mesh and the mesh after one uniform refinement are given in Figure \ref{fig:mesh1}.
A sequence of uniform refinements are then applied to generate a set of nested meshes.
Notice that the meshes are non-conformal and there are hanging nodes. However, the interior penalty discontinuous Galerkin method
can deal with such meshes without special treatments.
We solve the Poisson equation using the interior penalty discontinuous Galerkin formulation
(\ref{eq:dg}) on these meshes, where the local discrete spaces $V_K$ are taken to be $P_1$ polynomials on each $K\in \mathcal{T}_h$,
no matter whether $K$ is a triangle or quadrilateral.
The $H^1$ semi-norm and the $L^2$ norm of the errors are reported in Table \ref{tab:test1}
and Figure \ref{fig:test1}. These errors are computed using a 5th order Gaussian quadrature on triangles.
For quadrilateral elements, the errors can be conveniently computed by dividing the quadrilateral into two
triangles and then applying the Gaussian quadrature.
Our results show that the $H^1$ semi-norm has an approximate order of $O(h)$, while
the $L^2$ norm has an approximate order of $O(h^2)$, as predicted by the theoretical analysis.

\begin{figure}
\begin{center}
  \caption{Initial and refined mesh for test 1.} \label{fig:mesh1}
  \includegraphics[width=6cm]{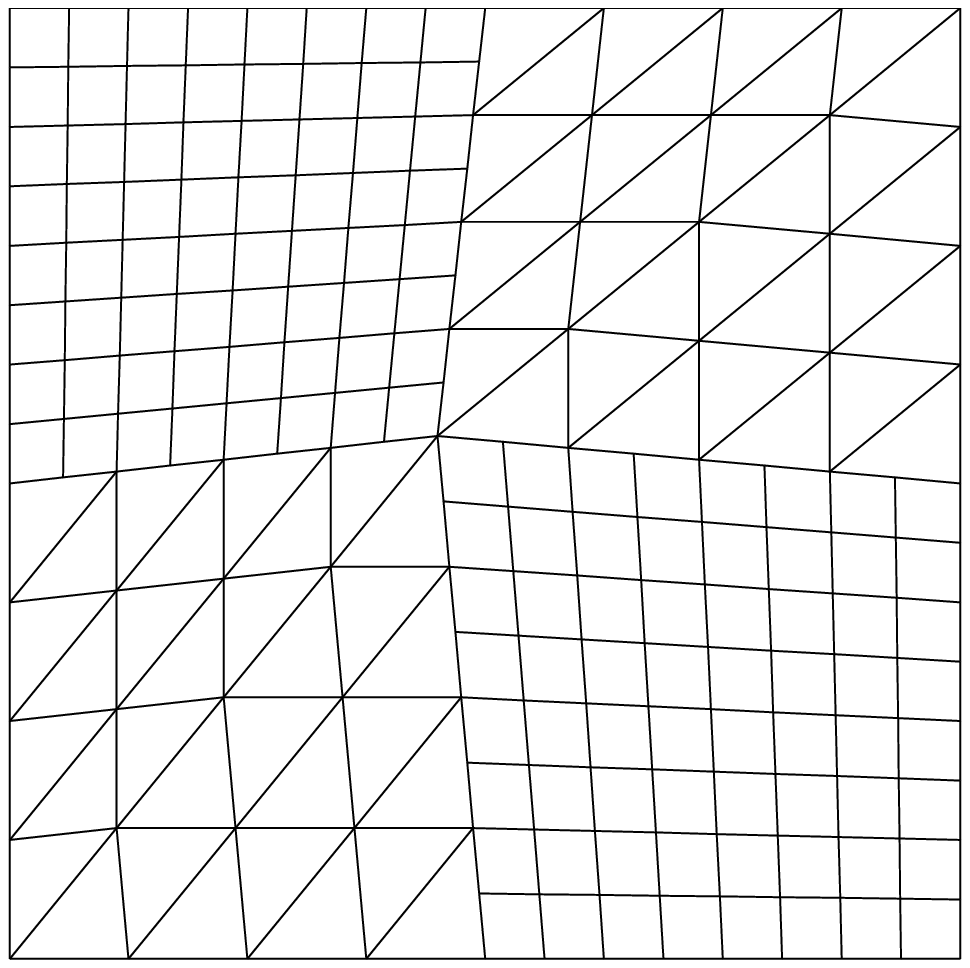}\quad\includegraphics[width=6cm]{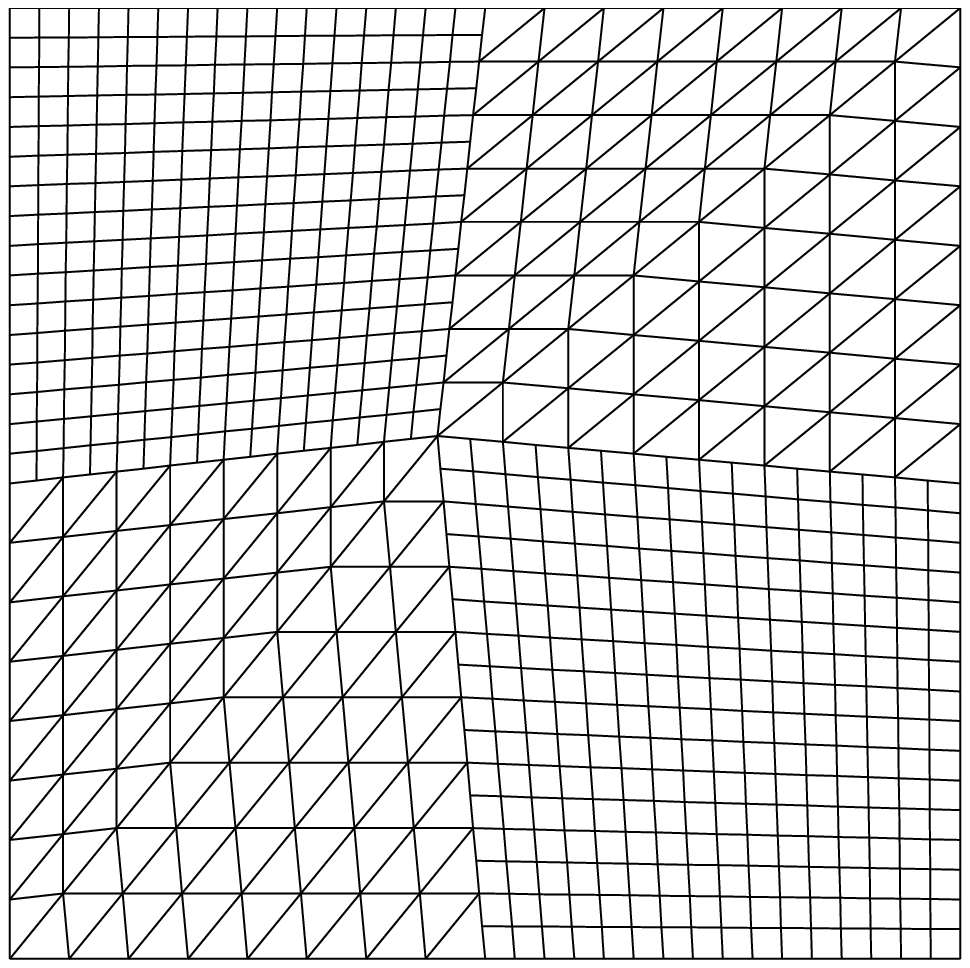}
\end{center}
\end{figure}

\begin{table}
\begin{center}
  \caption{Convergence rates for test 1.} \label{tab:test1}
  \begin{tabular}{|c|c|c|c|c|c|c|}
    \hline
    $h$ & $\frac{1}{16}$ & $\frac{1}{32}$ & $\frac{1}{64}$ & $\frac{1}{128}$ & $\frac{1}{256}$  & $O(h^r)$, $r=$ \\[1mm] \hline
    $|u-u_h|_{1,h}$ & 1.2006 & 0.5904 & 0.2917 & 0.1452 & 0.0725  & 1.0124 \\ \hline
    $\|u-u_h\|$ & 0.0551 & 0.0159 & 0.0042 & 0.0011 & 0.0003  & 1.9270 \\ \hline
  \end{tabular}
\end{center}
\end{table}

\begin{figure}
\begin{center}
  \caption{Convergence rates for test 1.} \label{fig:test1}
  \includegraphics[width=8cm]{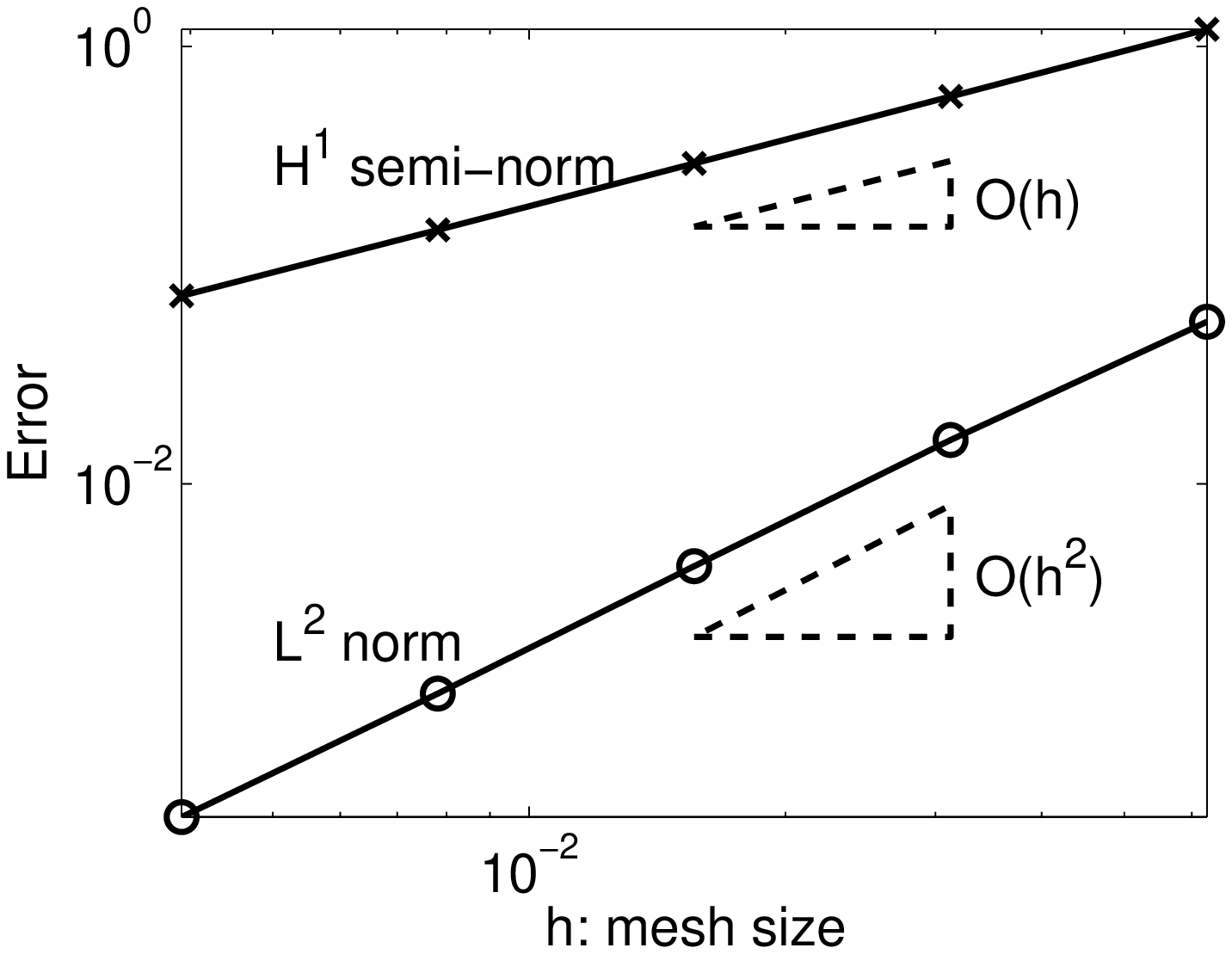}
\end{center}
\end{figure}

In the second test, we consider a hybrid mesh containing mainly hexagons, but with a few quadrilaterals and pentagons.
Indeed, it is derived by taking the dual mesh of a simple triangular mesh.
In Figure \ref{fig:mesh2}, the initial triangular mesh and its dual mesh are shown.
By refining the triangular mesh and computing its dual mesh, we get a sequence of hexagon hybrid meshes.
Again, we solve the interior penalty discontinuous Galerkin formulation
(\ref{eq:dg}) on these hexagon hybrid meshes, with the local discrete spaces $V_K$ of $P_1$ polynomials.
The $H^1$ semi-norm and the $L^2$ norm of the errors are reported in Table \ref{tab:test2}
and Figure \ref{fig:test2}.
Optimal convergence rates are achieved.

\begin{figure}
\begin{center}
  \caption{The original triangular mesh and its dual mesh used in test 2.} \label{fig:mesh2}
  \includegraphics[width=6cm]{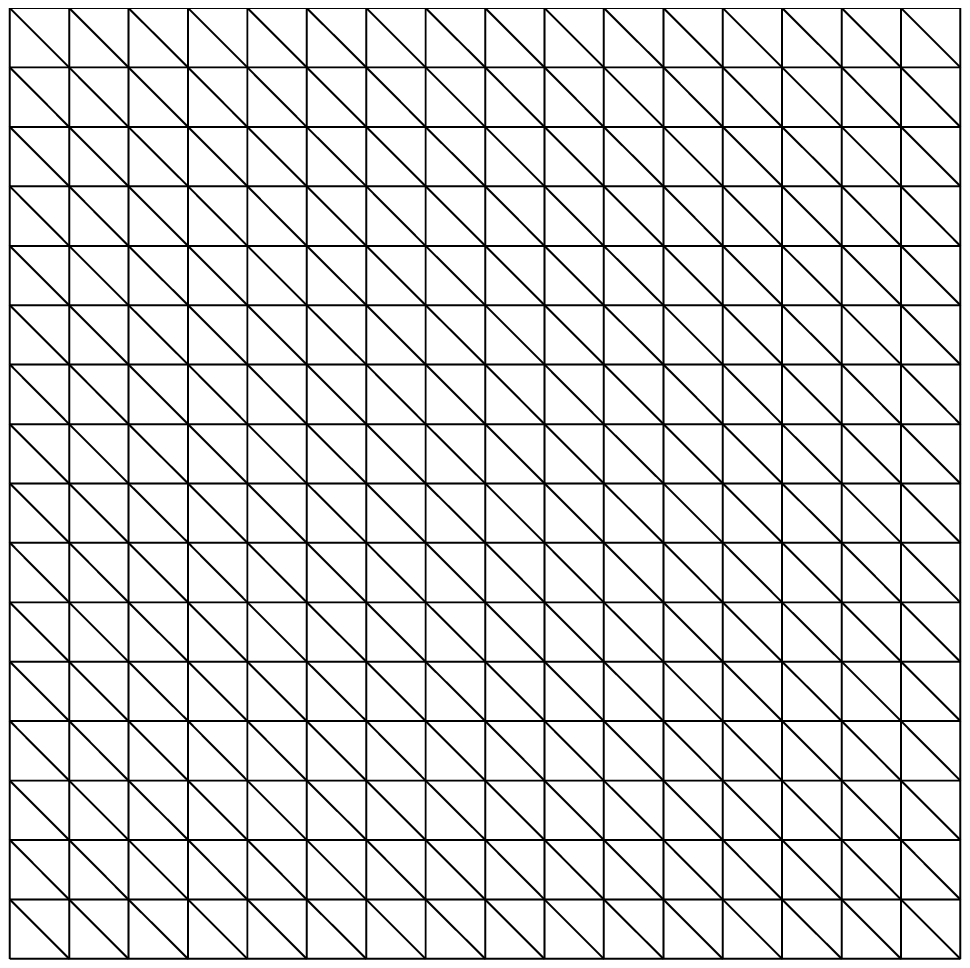}\quad\includegraphics[width=6cm]{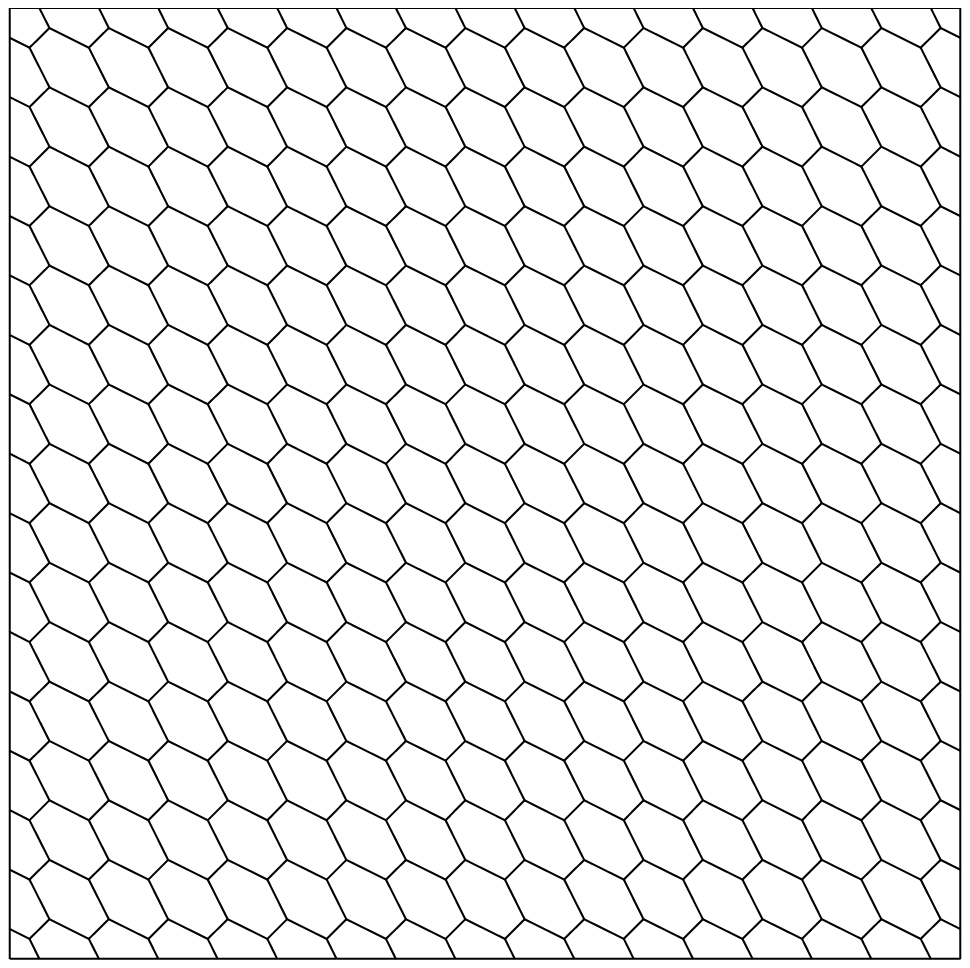}
\end{center}
\end{figure}

\begin{table}
\begin{center}
  \caption{Convergence rates for test 2.} \label{tab:test2}
  \begin{tabular}{|c|c|c|c|c|c|c|}
    \hline
    $h$ & $\frac{1}{16}$ & $\frac{1}{32}$ & $\frac{1}{64}$ & $\frac{1}{128}$ & $\frac{1}{256}$  & $O(h^r)$, $r=$ \\[1mm] \hline
    $|u-u_h|_{1,h}$ & 0.8139 & 0.3868 & 0.1894 & 0.0941 & 0.0470 & 1.0270\\ \hline
    $\|u-u_h\|$ & 0.0461 & 0.0129 & 0.0034 & 0.0009 & 0.0002 & 1.9393\\ \hline
  \end{tabular}
\end{center}
\end{table}

\begin{figure}
\begin{center}
  \caption{Convergence rates for test 2.} \label{fig:test2}
  \includegraphics[width=8cm]{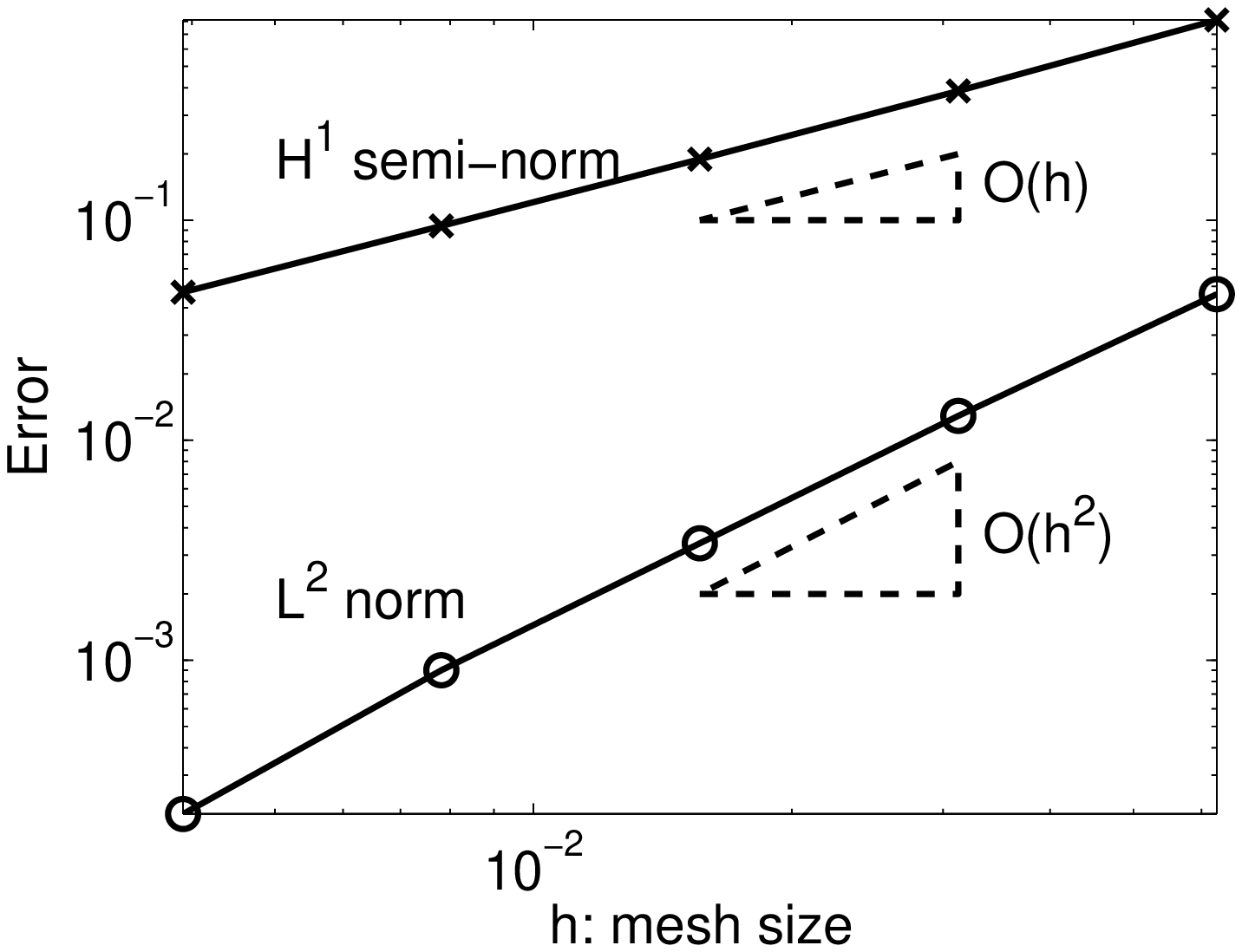}
\end{center}
\end{figure}


\end{document}